\newcommand{\R}{\mathbb{R}}                     
\newcommand{\CP}{\mathbb{C}\mathrm{P}}
\newcommand{\CH}{\mathbb{C}\mathrm{H}}
\newcommand{\ol}{\mathrm{Hol}}
\newcommand{\hilb}{\mathcal{H}}
\newcommand{\C}{\mathbb{C}}                     
\newcommand{\de}{\partial}                      
\newcommand{\Ric}{\mathrm{Ric}}
\newcommand{\re}{\mathop{\mathrm{Re}}}
\newcommand{\K}{K\"{a}hler}
\newtheorem{theor}{Theorem}
\newtheorem{prop}[theor]{Proposition}
\newtheorem{lem}[theor]{Lemma}
\newtheorem{cor}[theor]{Corollary}
\newtheorem{ex}{Example}
\newtheorem{remar}[theor]{Remark}
\begin{document}
\title{Balanced metrics on Hartogs domains}
\author[A. Loi, M. Zedda]{Andrea Loi, Michela Zedda}
\address{Dipartimento di Matematica e Informatica, Universit\`{a} di Cagliari,
Via Ospedale 72, 09124 Cagliari, Italy}
\email{loi@unica.it; michela.zedda@gmail.com  }
\thanks{
The first author was supported  by the M.I.U.R. Project \lq\lq Geometric
Properties of Real and Complex Manifolds'';
the second author was  supported by  RAS
through a grant financed with the ``Sardinia PO FSE 2007-2013'' funds and 
provided according to the L.R. $7/2007$.}
\date{}
\subjclass[2000]{53C55; 58C25.} 
\keywords{K\"{a}hler metrics;  balanced metrics; Hartogs domains}

\begin{abstract}
An $n$-dimensional strictly pseudoconvex Hartogs domain $D_F$ can be equipped with a
natural \K\ metric $g_F$. 
In this paper we prove that if  $m_0g_F$ is   balanced for a given positive integer $m_0$ then   
$m_0>n$ and $(D_F, g_F)$ is holomorphically isometric  to an open subset of
the $n$-dimensional complex hyperbolic space.
\end{abstract}

\maketitle

\section{Introduction}
Let $M$ be a complex manifold  endowed with a K\"ahler metric $g$
and let $\omega$ be the \K\ form associated to $g$, i.e. $\omega (\cdot ,\cdot  )=g(\cdot, J\cdot)$.
Assume that  the metric $g$ can be described by a
strictly plurisubharmonic real valued function $\Phi :M\rightarrow\R$, called a {\em K\"ahler potential} for $g$, 
i.e.
$\omega=\frac{i}{2}\de\bar\de \Phi$.

A K\"ahler potential is not unique, in fact it is defined up to an addition with the real part of a holomorphic function on $M$. 
Let $\hilb_\Phi$ be the weighted Hilbert space of square integrable holomorphic functions on $(M, g)$, with weight $e^{-\Phi}$, namely
\begin{equation}\label{hilbertspace}
\hilb_\Phi=\left\{ f\in\ol(M) \ | \ \, \int_M e^{-\Phi}|f|^2\frac{\omega^n}{n!}<\infty\right\},
\end{equation}
where $\frac{\omega^n}{n!}=\det(\de\bar \de \Phi)\frac{\omega_0^n}{n!}$ is the volume form associated to $\omega$ and $\omega_0=\frac{i}{2}\sum_{\alpha=0}^{n-1} dz_\alpha\wedge d\bar z_\alpha$ is the standard K\"ahler form on $\C^n$.
If $\hilb_\Phi\neq \{0\}$ we can pick an orthonormal basis $\{f_j\}$ and define its reproducing kernel by
$$K_{\Phi}(z, z)=\sum_{j=0}^\infty |f_j(z)|^2 .$$
Consider the function 
\begin{equation}\label{epsilon}
\varepsilon_g(z)=e^{-\Phi(z)}K_{\Phi}(z, z). 
\end{equation} 
As suggested by the notation $\varepsilon _g$ depends only on the metric $g$ and not on the choice of the K\"ahler potential $\Phi$. In fact, if $\Phi'=\Phi-\re(\varphi)$, for some holomorphic function $\varphi$, is another potential for $\omega$, we have $e^{-\Phi'}=e^{-\Phi}|e^{\varphi}|^2$. Furthermore, since $\varphi$ is holomorphic and $\de\bar\de \Phi'=\de\bar\de \Phi$,  $e^\varphi$ is an isomorphism between $\hilb_\Phi$ and $\hilb_{\Phi'}$, and thus we can write $K_{\Phi'}(z, z)=|e^{\varphi}|^2K_{\Phi}(z, z) $, where $K_{\Phi}(z, z)$ (resp. $K_{\Phi'}(z, z)$) is the reproducing kernel of $\hilb_\Phi$ (resp. $\hilb_{\Phi'}$). It follows that $e^{-\Phi(z)}K_{\Phi}(z, z)=e^{-\Phi'(z)}K_{\Phi'}(z, z)$, as claimed. 

In the literature the function $\varepsilon_g$ was first introduced under the name of $\eta$-{\em function} by J. Rawnsley in \cite{rawnsley}, later renamed as $\varepsilon$-{\em function} in \cite{CGR}, and it is also appear under the name of {\em distortion function} for the study of Abelian varieties by  J. R. Kempf \cite{kempf} and S. Ji \cite{ji}, and for complex projective varieties by S. Zhang \cite{zhang}. It also plays a fundamental role in the geometric quantization of a K\"ahler manifold and in the Tian-Yau-Zelditch asymptotic expansion (see \cite{graloi} and references therein).

\vskip 0,3cm

\noindent
{\bf Definition.}
{\em Let $g$ be a  \K\  metric  on a complex manifold $M$  such that $\omega =\frac{i}{2}\partial\bar\partial\Phi$. The metric $g$ is \emph{balanced} if  the function $\varepsilon_g$ is a positive  constant.}



\begin{remar}\rm
 The definition of balanced metrics
 was originally given by S. Donaldson \cite{donaldson} in the case of  a compact polarized \K\ manifold $(M, g)$ and generalized in \cite{arezzoloi}
 (see also \cite{cucculoibal}, \cite{grecoloi}, \cite{englisweigh}). 
 In the compact case   the potential $\Phi$ will certainly not exist globally and the only holomorphic functions on $M$ are the constants.
 Nevertheless, since $g$ is polarized there exists an  hermitian  line bundle $(L, h)\rightarrow M$ such that  $\Ric (h)=\omega$.
 One can then endowed the space of global holomorphic sections of $L$, denoted by  $H^0(L)$, 
 with the scalar product
 $$\langle  s,t \rangle_h= \int_M h(s(x),t(x))\frac{\omega^n}{n!} , s, t\in H^0(L).$$
 If $H^0(L)\neq\{0\}$ one can set  
$$\varepsilon_g(x)=\sum_{j=0}^Nh(s_j(x),s_j(x)),$$
where $\{s_0, \dots, s_N\}$, $N+1=\dim H^0(L)$, is an orthonormal basis of  $(H^0(L), \langle , \rangle_h)$
and  define the metric $g$ {\em balanced} if $\varepsilon_g$ is a positive constant.
\end{remar}

In this paper we study  the  balanced condition  for a particular class of  strictly pseudoconvex  domains $D_F$ of $\C^n$, called {\em Hartogs domains} (see next section or  
\cite{englis}), equipped with a \K\ metric $g_F$ depending on a real valued function $F$. Our main result
is Theorem \ref{hartogs} below where we prove that if  the metric $m_0g_F$ of a $n$-dimensional Hartogs domain $D_F$ is balanced for a given
$m_0>n$, then $(D_F, g_F)$ is holomorphically isometric
to  an open subset  of  the $n$-dimensional  complex hyperbolic space.
The paper contains another section with the description of the  Hartogs domains
and the proof of Theorem \ref{hartogs}.

\section{Statement and proof of the main result}

Let $x_0 \in \R^+ \cup \{ + \infty \}$ and let $F: [0, x_0)
\rightarrow (0, + \infty)$ be a decreasing continuous function,
smooth on $(0, x_0)$. The Hartogs domain $D_F\subset
{\C}^{n}$ associated to the function $F$ is defined by
$$D_F = \{ (z_0, z_1,\dots ,z_{n-1}) \in {\C}^{n} \; | \; |z_0|^2 < x_0, \ ||z||^2  < F(|z_0|^2)
\}, $$
where $||z||^2=|z_1|^2+\dots + |z_{n-1}|^2$.
We shall assume that the natural $(1, 1)$-form on $D_F$  given by
\begin{equation}\label{omegaf}
\omega_F = \frac{i}{2} \partial \overline{\partial}
\log \left(\frac{1}{F(|z_0|^2) - ||z||^2 }\right),
\end{equation}
 is  a \K\ form on $D_F$. The following proposition gives some
conditions on $D_F$ equivalent to this assumption:
\begin{prop}[\cite{canonical}]\label{Kmet}
Let $D_F$ be a Hartogs domain in ${\C}^{n}$. Then the
following conditions are equivalent:
\begin{itemize}
\item [(i)] the $(1, 1)$-form $\omega_F$  given by (\ref{omegaf})
is a \K\ form; \item [(ii)] the function $- \frac{x F'(x)}{F(x)}$
is strictly increasing, namely $-\left( \frac{x F'(x)}{F(x)}\right)' >0$
for every $x \in [0, x_0)$; \item [(iii)] the boundary of $D_F$ is
strongly pseudoconvex at all $z = (z_0, z_1,\dots,z_{n-1})$ with
$|z_0|^2 < x_0$.
\end{itemize}
\end{prop}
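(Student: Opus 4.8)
The plan is to prove the equivalence of the three conditions by computing the Kähler form $\omega_F$ explicitly and reducing the positive-definiteness of the associated Hermitian matrix to a one-variable convexity condition on $F$. First I would introduce the auxiliary variable $x=|z_0|^2$ and the function $\Phi=-\log\bigl(F(x)-\|z\|^2\bigr)$, and carry out the computation of $\partial\bar\partial\Phi$ in the coordinates $(z_0,z_1,\dots,z_{n-1})$. Because $\Phi$ depends on the $z_1,\dots,z_{n-1}$ only through $\|z\|^2$ and on $z_0$ only through $x=|z_0|^2$, the resulting $n\times n$ Hermitian matrix $\bigl(\partial_\alpha\bar\partial_\beta\Phi\bigr)$ has a block structure: a scalar entry in the $z_0$-direction, a multiple of the identity plus a rank-one piece in the $z_1,\dots,z_{n-1}$ block, and off-diagonal coupling terms between $z_0$ and the remaining variables.

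The key step is then to decide when this Hermitian matrix is positive definite. I would exploit the fact that the $z_1,\dots,z_{n-1}$ block is of the form $a\,I + b\,\bar z z^{t}$ (a scalar times the identity plus a rank-one perturbation), whose eigenvalues are explicit, so that positive definiteness of the whole matrix reduces, via a Schur-complement argument, to the positivity of the $z_0$-entry together with a single scalar inequality. After simplification I expect this scalar inequality to collapse precisely to the statement that $-\dfrac{xF'(x)}{F(x)}$ is strictly increasing, i.e.\ $-\left(\dfrac{xF'(x)}{F(x)}\right)'>0$, which yields the equivalence (i)$\sse$(ii). The main computational obstacle will be organizing the derivatives cleanly: writing $G(x)=F(x)-\|z\|^2$ and expressing everything in terms of $F$, $F'$, $F''$ and $x$ so that the Schur complement factors through the derivative of $xF'/F$ rather than leaving a tangle of terms. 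It is worth checking the limiting behavior as $x\to 0$ and the boundary case $\|z\|^2\to F(x)$ to make sure the strict inequality holds on all of $[0,x_0)$.

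For the equivalence with (iii), I would use the standard characterization of strong pseudoconvexity in terms of the positivity of the complex Hessian (the Levi form) of a defining function restricted to the complex tangent space of the boundary. A natural defining function for $D_F$ is $r(z)=\|z\|^2-F(|z_0|^2)$, and the Levi form of $r$ on the analytic tangent space is computed by essentially the same derivative bookkeeping as above. I would verify that the Levi-form positivity on the complex tangent space at each boundary point $z$ with $|z_0|^2<x_0$ translates into the identical convexity condition on $F$, closing the cycle (ii)$\sse$(iii). Since this is a previously published result cited as \cite{canonical}, I would keep the exposition brief, emphasizing the block reduction and the Schur complement, and referring to \cite{canonical} for the full details of the boundary computation.
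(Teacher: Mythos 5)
Your plan is sound, but note first that there is nothing in this paper to compare it with: Proposition \ref{Kmet} is stated without proof, being imported verbatim from \cite{canonical}, so your argument can only be judged on its own merits. On those merits it is correct, and the two computations you leave as ``expectations'' do come out exactly as you predict. Writing $x=|z_0|^2$, $u=F(x)-\|z\|^2$ and $G(x)=-\left(\frac{xF'}{F}\right)'$, the complex Hessian of $\Phi_F=-\log u$ has entries (indices $\alpha,\beta\geq 1$)
\begin{equation*}
\Phi_{0\bar 0}=-\frac{F'+xF''}{u}+\frac{x(F')^2}{u^2},\qquad
\Phi_{0\bar\beta}=-\frac{F'\bar z_0 z_\beta}{u^2},\qquad
\Phi_{\alpha\bar\beta}=\frac{\delta_{\alpha\beta}}{u}+\frac{\bar z_\alpha z_\beta}{u^2};
\end{equation*}
the lower block is positive definite for free, its inverse is $u\bigl(I-\frac{\bar z\, z^{t}}{F}\bigr)$ by Sherman--Morrison, and the Schur complement collapses to $\frac{F}{u}\,G(x)$. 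Since every $x\in[0,x_0)$ is realized at some point of $D_F$ (take $z=0$), positivity of $\omega_F$ on all of $D_F$ is equivalent to $G>0$ on $[0,x_0)$, i.e.\ (i)$\sse$(ii). A cross-check worth adding to your write-up: the Hessian determinant is then $\frac{F}{u^{n}}\cdot\frac{FG}{u}=\frac{F^2G}{u^{n+1}}$, which is precisely the density appearing in the paper's formula (\ref{volumelement}). For (iii) matters are even simpler than you anticipate: with $r=\|z\|^2-F(|z_0|^2)$ the mixed derivatives $\partial_{z_0}\bar\partial_{z_\beta}r$ vanish identically, so there are no off-diagonal coupling terms at all and the Levi form is the diagonal matrix with entries $-(F'+xF'')$ and $1$; restricting to the complex tangent space $\{-F'\bar z_0v_0+\sum_{\alpha\geq1}\bar z_\alpha v_\alpha=0\}$ and minimizing $\sum_{\alpha\geq1}|v_\alpha|^2$ by Cauchy--Schwarz (using $\|z\|^2=F$ at the boundary) gives the minimum value $F\,G(x)\,|v_0|^2$, so strong pseudoconvexity at the boundary points with $|z_0|^2<x_0$ is again equivalent to $G>0$ on $[0,x_0)$, i.e.\ (ii)$\sse$(iii). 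The one thing to repair is presentational rather than mathematical: since the present paper proves nothing here, your closing move of ``referring to \cite{canonical} for the full details of the boundary computation'' would leave your text as incomplete as the paper itself; the two short computations above are all that is needed to make it self-contained.
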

The \K\ metric $g_F$ associated to the \K\ form $\omega_F$ is the metric we will be dealing with in the present paper. It follows by (\ref{omegaf}) that a K\"ahler potential for this metric is given by
$$\Phi_F=-\log \left(F(|z_0|^2) - ||z||^2\right).$$

\begin{ex}\label{uno}\rm
When  $F(x)=1-x, 0\leq x< 1$, 
$$D_F=\CH^n=\{(z_0, z_1,\dots ,z_{n-1})\ |\ |z_0|^2+\|z\|^2<1)\},$$
  the $n$-dimensional complex hyperbolic
space $\CH^n$ and $g_F$ is the hyperbolic metric, i.e.
$g_F=g_{hyp}$. 
A K\"ahler potential for $g_{hyp}$ is given by $\Phi_{hyp}=-\log(1-\sum_{\alpha=0}^{n-1}|z_\alpha|^2)$, and the associated  volume form reads
$$\frac{\omega_{hyp}^n}{n!}=\left(1-\sum_{\alpha=0}^{n-1}|z_\alpha|^2\right)^{-(n+1)}\frac{\omega_0^n}{n!}.$$ 
Consider  $m \,g_{hyp}$, for  a positive integer $m$, 
and let  $\hilb_{m\Phi_{hyp}}$ be the weighted Hilbert space of square integrable holomorphic functions on $(\CH^n,  m \,g_{hyp})$, with weight  $e^{-m\Phi_{hyp}}=\left(1-\sum_{\alpha=0}^{n-1}|z_\alpha|^2\right)^{ m }$, namely
$$\hilb_{m\Phi_{hyp}}=\left\{ \varphi\in\ol(\CH^n)\ | \ \int_{\CH^n}\left(1-\sum_{\alpha=0}^{n-1}|z_\alpha|^2\right)^{ m -(n+1)}|\varphi|^2\frac{\omega_0^n}{n!}<\infty \right\}.$$
If $m\leq n$, then it is not hard to see that  $\hilb_{m\Phi_{hyp}}=\{0\}$. On the other hand, for $m>n$,  an orthonormal basis  for $\hilb_{m\Phi_{hyp}}$ is given by 
$$\left\{\dots, \frac{\sqrt{( m +j-1)!}}{\sqrt{\pi^n}\sqrt{j_1!\cdots j_{n-1}!( m -n-1)!}}z_0^{j_0}\cdots z_{n-1}^{j_{n-1}},\dots\right\}.$$
where $j=j_0+\dots+j_{n-1}$.
In fact, since the metric depends only on the squared module of the variables, it is easy to see that the monomials $z_0^{j_0}\cdots z_{n-1}^{j_{n-1}}$ are a complete orthogonal system for $\hilb_{m\Phi_{hyp}}$. Further, the following computation
\begin{equation}
\begin{split}
&\int_{\CH^n} |z_0^{j_0}\cdots z_{n-1}^{j_{n-1}}|^{2}\left(1-\sum_{\alpha=0}^{n-1}|z_\alpha|^2\right)^{ m -(n+1)}\frac{i^n}{2^n}dz_0\wedge d\bar z_0\wedge\dots\wedge dz_{n-1}\wedge d\bar z_{n-1} \\
=&\pi^n\int_0^1\cdots \int_0^{1-r_1-\dots-r_{n-1}}r_0^{j_0} \cdots r_{n-1}^{j_{n-1}}\left(1-\sum_{\alpha=0}^{n-1} r_\alpha^{j_\alpha}\right)^{ m -(n+1)}dr_0 \cdots dr_{n-1}\\
=&\pi^n\frac{j_0!\cdots j_{n-1}!( m -n-1)!}{( m +j-1)!},
\end{split}\nonumber
\end{equation}
justifies the choice of the normalization constants. 
The reproducing kernel for $\hilb_{m\Phi_{hyp}}$ is then given by
$$K_{m\Phi_{hyp}}(z, z)=\frac{( m -1)\cdots( m -n)}{\pi^n (1-\sum_{j=0}^{n-1}|z_j|^2)^ m },$$
and thus 
$$\varepsilon_{ m g_{hyp}}(z)=\frac{( m -1)\cdots( m -n)}{\pi^n}.$$
\end{ex}

In this  example we have  that  the metric $m \,g_{hyp}$ is balanced iff   $m>n$. 
In the geometric quantization framework introduced in \cite{CGR} 
the \K\ forms satisfying this property play a fundamental role for the quantization by deformation of the \K\ manifold
$(M, g)$. In our setting one says that a  \K\ manifold $(M, g)$ admits a {\em regular quantization} if  the functions 
$$\varepsilon_{mg} (z)=e^{-m\Phi (z)}K_{m\Phi}(z, z)$$
are positive constants (depending on $m$) for all   sufficiently large positive  integers.

Regarding regular quantizations we  have  the following lemma which will be an important ingredient in the proof of our main result, Theorem \ref{hartogs}.

\begin{lem}\label{constsc}
Let $g$ be a K\"ahler metric on a complex manifold $M$. If $(M, g)$  admits a  regular quantization  then the scalar curvature
of the metric $g$ is constant.
\end{lem}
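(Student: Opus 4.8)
The plan is to read off the constancy of the scalar curvature from the Tian--Yau--Zelditch (TYZ) asymptotic expansion of the $\varepsilon$-function, which is precisely the tool alluded to in the introduction (see \cite{graloi} and the references therein). The first step is to invoke the existence of this expansion in the present weighted setting: as $m\to\infty$ one has an asymptotic development $\varepsilon_{mg}(z)\sim\sum_{j\geq 0}a_j(z)\,m^{n-j}$, valid uniformly on compact subsets of $M$, whose coefficients $a_j$ are universal polynomial expressions in the curvature of $g$ and its covariant derivatives. The only two features I really need are that the leading coefficient $a_0$ is a nonzero constant and that the next coefficient $a_1$ is a nonzero constant multiple of the scalar curvature $\scal(g)$ (in the standard normalization $a_0\equiv 1$ one has $a_1=\tfrac12\scal(g)$); in particular $a_1$ is constant on $M$ if and only if $\scal(g)$ is.

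Next I would feed in the hypothesis. Since $(M,g)$ admits a regular quantization, for every sufficiently large integer $m$ there is a constant $c_m$ with $\varepsilon_{mg}(z)=c_m$ for all $z\in M$. Fixing two arbitrary points $p,q\in M$, we get $\varepsilon_{mg}(p)=c_m=\varepsilon_{mg}(q)$ for all large $m$, so the asymptotic expansions of $\varepsilon_{mg}$ at $p$ and at $q$ agree for every such $m$.

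The core step is then uniqueness of asymptotic expansions. Subtracting the two developments, the series $\sum_{j\geq 0}\bigl(a_j(p)-a_j(q)\bigr)m^{n-j}$ must vanish to all orders as $m\to\infty$; extracting coefficients one after another (first dividing by $m^{n}$ and letting $m\to\infty$, then by $m^{n-1}$, and so on) forces $a_j(p)=a_j(q)$ for every $j$. In particular $a_1(p)=a_1(q)$, and since $p,q$ were arbitrary, $a_1$ is constant on $M$. As $a_1$ is a nonzero constant multiple of $\scal(g)$, the scalar curvature of $g$ is constant, which is the assertion.

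I expect the main obstacle to be the rigorous justification of the TYZ expansion, and above all the identification of $a_1$ with the scalar curvature, in this noncompact, weighted framework: the classical statements are formulated for compact polarized \K\ manifolds, whereas here $\varepsilon_{mg}$ is built from the weighted Bergman kernel $K_{m\Phi}$ of the noncompact domain $M$ with weight $e^{-m\Phi}$. I would address this by appealing to the weighted/noncompact versions of the expansion available in the literature (of the type developed by Engli\v{s} and by Ma--Marinescu, and surveyed in \cite{graloi}), after checking that the analytic hypotheses guaranteeing the expansion---finiteness and nonvanishing of the Hilbert spaces $\hilb_{m\Phi}$ together with the requisite uniform control---are exactly what is encoded in the definition of a regular quantization.
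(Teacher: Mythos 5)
The paper does not actually prove this lemma: its ``proof'' is a citation, to Theorem 5.3 of \cite{arezzoloiquant} for the compact case and to Theorem 4.1 of \cite{quant} for the noncompact one. What you wrote is, in substance, a reconstruction of the compact-case proof: on a compact polarized manifold the expansion $\varepsilon_{mg}\sim\sum_j a_j m^{n-j}$ with $a_0=1$ and $a_1=\tfrac12\scal(g)$ is a theorem (Zelditch for existence, Lu for the identification of $a_1$), and your coefficient-extraction step via uniqueness of asymptotic coefficients is exactly how constancy of $\scal(g)$ is deduced there. That part of your argument is correct and matches the cited source.

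The gap is the noncompact case, which is the only case this paper actually needs, since a Hartogs domain $D_F$ is noncompact. Your closing claim --- that the analytic hypotheses guaranteeing a weighted/noncompact expansion ``are exactly what is encoded in the definition of a regular quantization'' --- is not true: regularity says only that each $\varepsilon_{mg}$ is a positive constant for $m$ large, and this implies none of the hypotheses (completeness, bounded geometry, spectral conditions in the Ma--Marinescu setting; bounded strictly pseudoconvex domains with suitably regular data in the Engli\v{s} setting) under which those expansions are proved; there is no general TYZ-type expansion valid on an arbitrary noncompact \K\ manifold, where even $\hilb_{m\Phi}=\{0\}$ can occur. The way the cited Theorem 4.1 of \cite{quant} circumvents this is to use regularity itself to manufacture the asymptotics rather than to verify hypotheses of an off-the-shelf theorem: if $\varepsilon_{mg}\equiv c_m$ then $K_{m\Phi}(x,x)=c_m e^{m\Phi(x)}$, and since a reproducing kernel is determined by its values on the diagonal, $K_{m\Phi}(x,y)=c_m e^{m\tilde\Phi(x,\bar y)}$ for the sesqui-holomorphic extension $\tilde\Phi$ of $\Phi$ (in particular $\Phi$ is forced to be real-analytic). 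Plugging this into the reproducing identity $K_{m\Phi}(x,x)=\int_M |K_{m\Phi}(x,y)|^2 e^{-m\Phi(y)}\frac{\omega^n}{n!}$ gives
\begin{equation}
\frac{1}{c_m}=\int_M e^{-mD(x,y)}\frac{\omega^n}{n!}(y),\nonumber
\end{equation}
where $D$ is Calabi's diastasis. The right-hand side is a Laplace-type integral whose large-$m$ expansion is a purely local computation (due to Engli\v{s}), with subleading coefficient a universal nonzero multiple of $\scal(g)$, and the integrability and localization needed are supplied by regularity itself (the integral is finite, equal to $1/c_m$). Since the left-hand side does not depend on $x$, your own uniqueness-of-coefficients argument then yields $\scal(g)$ constant. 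So your overall strategy (expansion plus coefficient extraction) is the right one, but the step where you justify the expansion in the noncompact setting by appeal to general theorems is precisely the point that fails and requires this additional idea, which is the actual content of the reference the paper relies on.
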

\begin{proof}
See Theorem 5.3 in \cite{arezzoloiquant} for the compact case and  Theorem 4.1 in  \cite{quant} for the noncompact one.
\end{proof}


Hartogs domains have been
considered in \cite{englis} and \cite{quant} in the framework of
quantization of \K\ manifolds. In \cite{cucculoiglob} is studied the
existence of global symplectic coordinates on $(D_F, \omega_F)$
and \cite{riemhartogs} deals with the Riemannian geometry of
$(D_F, g_F)$.
In \cite{quant} (see also  \cite{canonical}) these domains are studied from the scalar curvature viewpoint.
The main results obtained in  \cite{canonical}  and in  \cite{riemhartogs} are summarized in the following two lemmata  needed in the proof of Theorem \ref{hartogs}
and its Corollary \ref{corolhartogs}.

\begin{lem}\label{scalcurv}
Let $(D_F , g_F )$ be a n-dimensional Hartogs domain. Assume 
that its scalar curvatures is constant. Then $(D_F , g_F )$ is 
holomorphically isometric to  an open subset  of  the complex hyperbolic space $(\CH^n, g_{hyp})$.
\end{lem}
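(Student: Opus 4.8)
The plan is to reduce the constant--scalar--curvature condition for $g_F$ to an ordinary differential equation in the single variable $x=|z_0|^2$ and then to integrate it explicitly. First I would compute the volume form. Writing $D=F(x)-\|z\|^2$, so that $\Phi_F=-\log D$, a block computation works smoothly because the fibre block $\big(\de_{z_\alpha}\de_{\bar z_\beta}\Phi_F\big)_{\alpha,\beta\geq 1}$ is a rank-one perturbation of a multiple of the identity; its determinant and the corresponding Schur complement are therefore elementary, and one finds
\begin{equation}\label{detg}
\det\big(\de_{z_\alpha}\de_{\bar z_\beta}\Phi_F\big)=\frac{T(x)}{D^{\,n+1}},\qquad T(x)=x\big((F')^2-FF''\big)-FF'.
\end{equation}
I would note that condition (ii) of Proposition \ref{Kmet} is exactly $T>0$, since $T=F^2\big(-xF'/F\big)'$.

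Next I would compute the scalar curvature from $\scal=-g^{\alpha\bar\beta}\de_{z_\alpha}\de_{\bar z_\beta}\log\det(g_F)$. Because $\log\det(g_F)=\log T(x)-(n+1)\log D$ and $\de\bar\de\log D=-\omega_F$, the Ricci tensor equals $-(n+1)g_F$ plus a correction concentrated in the $(0,\bar 0)$ entry, namely $-\big(x(\log T)'\big)'$ evaluated at $x$. Contracting, and using that the $(0,\bar 0)$ entry of the inverse metric is $FD/T$, I would obtain
\begin{equation}\label{scalform}
\scal=-n(n+1)-\big(x(\log T)'\big)'\,\frac{F(x)\big(F(x)-\|z\|^2\big)}{T(x)}.
\end{equation}
The crucial observation is that, once $x$ is fixed, the right-hand side is an affine function of $\|z\|^2$ whose slope is the positive multiple $F/T$ of $\big(x(\log T)'\big)'$. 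Hence $g_F$ has constant scalar curvature if and only if $\big(x(\log T)'\big)'\equiv 0$, i.e. $T(x)=a\,x^{c}$ for constants $a>0$ and $c$; in that case automatically $\scal\equiv-n(n+1)$ and the metric is in fact \K--Einstein.

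It then remains to integrate $T(x)=ax^{c}$ and identify the geometry. The admissibility constraints --- $F$ continuous and finite at $x=0$, decreasing, with $T>0$ --- force $c>-1$, and the solutions that stay regular at $x=0$ are precisely the power functions $F(x)=A-Bx^{\gamma}$ with $\gamma=c+1>0$ and $A,B>0$, for which one checks $T=AB\gamma^2x^{\gamma-1}$. For such an $F$ the holomorphic substitution $\zeta_0=z_0^{\gamma}$, $\zeta_\alpha=z_\alpha$ turns $F(|z_0|^2)=A-B|z_0|^{2\gamma}$ into the affine expression $A-B|\zeta_0|^2$, and a further linear rescaling brings the defining function to $1-\sum_\alpha|\eta_\alpha|^2$, i.e. to the ball model of Example \ref{uno}. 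Since a holomorphic map carrying a \K\ potential to a \K\ potential is automatically a holomorphic isometry, this exhibits $(D_F,g_F)$ as holomorphically isometric to an open subset of $(\CH^n,g_{hyp})$ --- and only to an \emph{open} subset, because $\zeta_0=z_0^{\gamma}$ is a biholomorphism merely on a sector (or slit neighbourhood) when $\gamma\neq 1$.

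I expect the main obstacle to be this last step: solving the nonlinear second-order equation $T(x)=ax^{c}$ and, above all, excluding the second (generically singular) solution branch by invoking the boundary regularity of $F$ at $x=0$, so that only the power-law solutions $A-Bx^{\gamma}$ survive. The determinant computation \eqref{detg} and the curvature formula \eqref{scalform}, though lengthy, are routine once the rank-one structure of the fibre block is exploited.
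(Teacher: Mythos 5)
Your computations in the first half are correct, and they essentially reconstruct what the cited source does (note the paper itself gives no proof of this lemma: it is quoted from \cite{canonical}, so your argument must stand on its own). Indeed $\det\big(\de_{z_\alpha}\de_{\bar z_\beta}\Phi_F\big)=T(x)/D^{n+1}$ with $T=F^2G=x\big((F')^2-FF''\big)-FF'$, which agrees with (\ref{volumelement}); the $(0,\bar 0)$ entry of the inverse metric is $FD/T$; and since the resulting scalar curvature is, for fixed $x$, an affine function of $\|z\|^2$ with slope a positive multiple of $\big(x(\log T)'\big)'$, constancy forces $T(x)=ax^{c}$ (here one needs $n\geq 2$, which is implicit in the definition of a Hartogs domain). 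Two things are still missing, one of them fatal.

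First, the lesser gap: you only \emph{assert} that the regular solutions of $T=ax^c$ are the power laws $A-Bx^{\gamma}$, flagging the integration as the expected obstacle. This can be done (set $t=\log x$, $\psi=2\log F-(c+1)t$; the equation becomes the Liouville ODE $\ddot\psi=-2ae^{-\psi}$, whose first integral yields $F=Ax^{p}-Bx^{q}$ with $p+q=c+1$, plus a logarithmic branch; positivity and finiteness of $F$ at $x=0$ then force $p=0$). Second, and decisively: your treatment of $\gamma\neq 1$ is wrong. The map $\zeta_0=z_0^{\gamma}$ is a biholomorphism only on a slit subdomain of $D_F$, so at best you show that a \emph{proper} piece of $D_F$ is isometric to an open subset of $\CH^n$ --- not the statement of the lemma --- and no patching can repair this: for instance for $\gamma=2$ the map $(z_0,z)\mapsto(z_0^2,z)$ exhibits $D_F$ as a branched double cover of the ball, not as a subset of it. The correct observation is that $\gamma\neq 1$ is incompatible with the standing hypothesis that $\omega_F$ is a K\"ahler form on \emph{all} of $D_F$. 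The domain always contains points with $z_0=0$, and there $G(0)=\lim_{x\to 0^+}ax^{\gamma-1}/F(x)^2$ equals $0$ when $\gamma>1$ (so $g_{0\bar 0}=-F'(0)/D=0$ and the metric degenerates) and $+\infty$ when $\gamma<1$ (the potential is not even $C^2$); both violate condition (ii) of Proposition \ref{Kmet} at $x=0$. In fact, already $T=ax^c$ together with $T=F^2G$ and $F(0),G(0)\in(0,\infty)$ forces $c=0$ before any ODE solving. Hence $F(x)=A-Bx$ is affine, and the \emph{linear} map $(z_0,z)\mapsto\big(\sqrt{B/A}\,z_0,\,z/\sqrt{A}\big)$, which changes the potential only by the additive constant $\log A$, is a globally defined holomorphic isometry of $(D_F,g_F)$ onto an open subset of $(\CH^n,g_{hyp})$. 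With this replacement your argument closes; as written, the case $\gamma\neq 1$ is neither excluded nor correctly identified with hyperbolic space.
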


\begin{lem}\label{lemcomplete}
A Hartogs domain $(D_F, g_F)$  is geodesically complete
if and only if
\begin{equation}\label{complcond}
\int_0^{\sqrt{x_0}} \sqrt{- \left(\frac{xF'}{F}\right )'}|_{x=
u^2} \ du = +\infty,
\end{equation}
where we define  $\sqrt{x_0}=+\infty$ for $x_0=+\infty$.
\end{lem}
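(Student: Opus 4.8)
The plan is to invoke the Hopf--Rinow theorem to trade geodesic completeness for metric completeness, and then to characterize metric completeness by the statement that \emph{every} curve $\gamma$ escaping to $\partial D_F$ (i.e.\ eventually leaving each compact subset of $D_F$) has infinite $g_F$-length. The technical input is the full coefficient matrix of $g_F$. Writing $y=|z_0|^2$, $t=\|z\|^2$, $P=F(y)-t$ and $h(x)=-\tfrac{xF'(x)}{F(x)}$ (so $h'>0$ by Proposition \ref{Kmet}(ii)), differentiating $\Phi_F=-\log P$ gives the blocks $g_{0\bar 0}=-\tfrac{F'+yF''}{P}+\tfrac{yF'^2}{P^2}=h'(y)+\tfrac{yF'^2 t}{P^2F}$, $\,g_{0\bar k}=-\tfrac{F'\bar z_0 z_k}{P^2}$, and, for $k,l\ge 1$, $\,g_{k\bar l}=\tfrac{\delta_{kl}}{P}+\tfrac{\bar z_k z_l}{P^2}$; the fibre block is exactly the complex hyperbolic metric of the ball $\{\|z\|^2<F(y)\}$, so each fibre $\{z_0=\text{const}\}$ is complete.

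I would then extract two quantities. \emph{First}, invert the fibre block by Sherman--Morrison and form the Schur complement $S=g_{0\bar 0}-\sum_{k,l}g_{0\bar k}(g^{k\bar l})g_{l\bar 0}$; using $P+t=F$ this collapses to $S=\tfrac{F(y)}{F(y)-t}\,h'(y)\ge h'(y)$. Since minimizing $|v|_{g_F}^2$ over the fibre directions yields precisely $S\,|v_0|^2$, the projection $\pi(z_0,z)=z_0$ is length non-increasing into $(\{|z_0|^2<x_0\},S\,|dz_0|^2)$, whence along any curve $\mathrm{length}_{g_F}(\gamma)\ge\int\sqrt{h'(|z_0|^2)}\,|\dot z_0|\,ds\ge\int\sqrt{h'(\rho^2)}\,|\dot\rho|\,ds$ with $\rho=|z_0|$, i.e.\ at least the total variation of $G(\rho):=\int_0^\rho\sqrt{h'(u^2)}\,du$. \emph{Second}, the same block inverse gives the bounded expression $\|\partial\Phi_F\|^2_{g_F}=\tfrac{t}{F}+\tfrac{P\,yF'^2}{F^3 h'(y)}$, which stays bounded on every slab $\{|z_0|^2\le y_1\}$, $y_1<x_0$ (the first term is $<1$, the second $\to 0$ as $P\to 0$, and all factors are continuous on the compact range $0\le y\le y_1$ with $h'>0$).

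For the \textbf{only if} direction I argue by contrapositive: if the integral converges, the radial curve $\rho\mapsto(\rho,0,\dots,0)$, $\rho\to\sqrt{x_0}$, has $g_{0\bar k}=0$ and $g_{0\bar 0}=h'(\rho^2)$ on it, hence finite length (a constant multiple of $\int^{\sqrt{x_0}}\sqrt{h'(\rho^2)}\,d\rho=\int_0^{\sqrt{x_0}}\sqrt{-(xF'/F)'}|_{x=u^2}\,du$) while leaving every compact subset, so $(D_F,g_F)$ is incomplete. For the \textbf{if} direction, assume the integral diverges and let $\gamma$ be any divergent curve. If $\limsup|z_0|^2=x_0$, then $G(\rho)\to\infty$ and the total-variation bound from the Schur complement forces $\mathrm{length}_{g_F}(\gamma)=\infty$. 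Otherwise $\gamma$ stays in a slab $\{|z_0|^2\le y_1\}$; being divergent it must have $P\to 0$, i.e.\ $\Phi_F\to+\infty$, yet by the gradient bound $\Phi_F$ grows at most linearly in arclength, so again $\mathrm{length}_{g_F}(\gamma)=\infty$. Thus all divergent curves are infinitely long and $(D_F,g_F)$ is complete.

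The step I expect to be the main obstacle is obtaining a \emph{uniform} lower length bound across the whole, geometrically inhomogeneous boundary of $D_F$ (the lateral face $\|z\|^2=F(y)$, the top face $|z_0|^2=x_0$, and their corner). The device that resolves this is the dichotomy above: the quotient-metric/Schur-complement estimate controls every approach with $|z_0|^2\to x_0$ (this is where the integral condition genuinely enters, via $S\ge h'(y)$), while the boundedness of $\|\partial\Phi_F\|_{g_F}$ shows the lateral face always lies at infinite distance, independently of $F$. Once the block inverse is in hand both key identities are routine, and the only care needed is in the topology: translating "divergent curve'' and "leaves every compact set'' into the two analytic cases, and checking continuity of $h'$ and $F$ down to $y=0$ (an interior point, where $g_F$ is smooth).
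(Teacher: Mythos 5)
The paper itself contains no proof of Lemma \ref{lemcomplete}: it is one of the two lemmata imported without proof (in this case from \cite{riemhartogs}, which deals with the Riemannian, i.e.\ geodesic, structure of $(D_F,g_F)$). So your argument is necessarily a different route from the paper's, and after checking it I find it correct as a self-contained proof. In your notation ($y=|z_0|^2$, $t=\|z\|^2$, $P=F(y)-t$, $h=-xF'/F$) both key identities hold: the Schur complement of the fibre block is indeed $S=\frac{F}{P}h'(y)\ge h'(y)$, so the length of any curve is bounded below, up to a universal constant, by the total variation of $\int_0^{|z_0|}\sqrt{h'(u^2)}\,du$ along it; and the gradient identity $\|\partial\Phi_F\|^2_{g_F}=\frac{t}{F}+\frac{P\,yF'^2}{F^3h'(y)}$ is also correct, giving the Lipschitz bound for $\Phi_F$ on slabs $\{|z_0|^2\le y_1\}$. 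Combined with Hopf--Rinow and the characterization of metric completeness via divergent curves, your dichotomy (escape with $\limsup|z_0|^2=x_0$ versus escape to the lateral face inside a slab) closes both implications, and the ``only if'' direction via the finite-length radial curve is fine. Conceptually, both your proof and the cited one rest on the same geometric fact --- condition (\ref{complcond}) is exactly completeness of the disc $\{z=0\}$, which is totally geodesic (fixed-point set of the isometry $(z_0,z)\mapsto (z_0,-z)$) and carries the induced metric $h'(|z_0|^2)|dz_0|^2$ up to a constant --- but you replace the study of geodesics by two matrix estimates, which is more elementary and is all the present paper would need.

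Three small repairs, none fatal. (1) Your displayed coefficient $g_{0\bar 0}=h'(y)+\frac{yF'^2t}{P^2F}$ is wrong; the correct identity is $g_{0\bar 0}=\frac{F}{P}h'(y)+\frac{yF'^2t}{P^2F}$ (test it on $F=1-x$, where $g_{0\bar 0}=(1-t)/P^2$). Note that this corrected version is the one consistent with your own value $S=\frac{F}{P}h'(y)$, and since only the inequality $S\ge h'(y)$ is ever used, nothing downstream is affected. (2) The claim that $\pi(z_0,z)=z_0$ is ``length non-increasing into $(\{|z_0|^2<x_0\},\,S\,|dz_0|^2)$'' is ill-posed, because $S$ depends on the fibre variable $t$; what you actually use, and what is true, is the pointwise bound $|v|^2_{g_F}\ge S|v_0|^2\ge h'(y)|v_0|^2$. (3) The slab estimate needs $\sup yF'^2<\infty$ and $\inf h'>0$ down to $y=0$; both follow from smoothness of $g_F$ on all of $D_F$ (smoothness of $g_{0\bar k}$ at $z_0=0$ forces $\sqrt{y}\,F'$ to stay bounded, and $g_{0\bar 0}|_{z=0}=h'(y)$ together with Proposition \ref{Kmet}(ii) forces $h'$ to be continuous and positive at $0$), so the caveat you flag at the end is easily discharged.
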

For the proof of Theorem \ref{hartogs} we need another result,   Lemma \ref{englislemma} below, 
which is a straightforward  generalization to dimension $n$ of Propositions 3.12 and 3.14 proven by M. Engli$\rm\check{s}$ in \cite{englis}.
In order to state it  we set
\begin{equation}\label{ck}
c_k(F^ m )=\int_0^{x_0}t^kF(t)^mG(t)dt,
\end{equation}
where
\begin{equation}\label{G(t)}
G(t)=-\left(\frac{tF'}{F}\right)',
\end{equation}
(notice that $G(t)>0$ by $(ii)$ in Proposition \ref{Kmet})
and assume that there exists a real number $\gamma$ such that for all positive integers $ m $
\begin{equation}\label{ckcompute}
\sum_{k=0}^\infty\frac{t^k}{c_k(F^ m )}=( m -1+\gamma)F(t)^{- m }.
\end{equation}
Many examples of Hartogs domains satisfy this condition  (see \cite[pp. 450-454]{englis}). Such domains   admit a  quantization by deformation  (see \cite{englis} for details) and so they are also  interesting from the physical point of view. 

Let us  also write the volume element corresponding to the metric $\omega_F$ by
\begin{equation}\label{volumelement}
\frac{\omega_F^n}{n!}=\frac{F^2(|z_0|^2)}{(F(|z_0|^2) -||z||^2)^{n+1}}\,G(|z_0|^2)\frac{\omega_0^n}{n!}.
\end{equation}

\begin{lem}\label{englislemma}
Let $(D_F,g_F)$ be an Hartogs domain and let $\hilb_ {m\Phi_F}$ 
be the corresponding  weighted Hilbert space given by (\ref{hilbertspace}).
Assume that condition (\ref{ckcompute}) is satisfied for all positive integers $m$. 
Then $\hilb_ {m\Phi_F}\neq \{0\}$ iff $m >n$ and its reproducing kernel  is given by
\begin{equation}
K_{ m\Phi_F} (z, z)=\frac{( m -2)\cdots( m -n)}{\pi^n (F(|z_0|^2)-||z||^2)^ m }\left[ m -1+(1-w) \gamma\right],\nonumber
\end{equation}
where $w=\frac{\|z\|^2}{F(|z_0|^2)}$ and $\gamma$ is the real number appearing in 
(\ref{ckcompute}).
\end{lem}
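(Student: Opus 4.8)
The plan is to exploit the circular symmetry of the problem. Both the weight $e^{-m\Phi_F}=(F(|z_0|^2)-\|z\|^2)^m$ and the volume form (\ref{volumelement}) depend on the coordinates only through the moduli $|z_\alpha|^2$, so $\hilb_{m\Phi_F}$ is invariant under the torus action $(z_0,\dots,z_{n-1})\mapsto(e^{i\theta_0}z_0,\dots,e^{i\theta_{n-1}}z_{n-1})$. As in the rotation-invariant situation treated in \cite{englis}, this forces the monomials $z^J=z_0^{j_0}\cdots z_{n-1}^{j_{n-1}}$, with $J=(j_0,\dots,j_{n-1})$, to be mutually orthogonal and to form a complete system in $\hilb_{m\Phi_F}$. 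Hence, once their squared norms are known, the diagonal reproducing kernel is $K_{m\Phi_F}(z,z)=\sum_J|z^J|^2/\|z^J\|^2$, and the whole problem reduces to evaluating $\|z^J\|^2$ and summing the resulting series.

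To compute $\|z^J\|^2$ I would pass to the real variables $r_\alpha=|z_\alpha|^2$ and integrate out the $n$ angles, which produces an overall factor $\pi^n$. Writing $\tilde\jmath=j_1+\dots+j_{n-1}$ for the fibre degree, one first integrates over $r_1,\dots,r_{n-1}$ on the simplex $\{r_\alpha\ge0,\ \sum_{\alpha\ge1}r_\alpha<F(r_0)\}$; after the rescaling $r_\alpha=F(r_0)s_\alpha$ this is a Dirichlet integral equal to $F(r_0)^{\tilde\jmath+m-2}\,j_1!\cdots j_{n-1}!\,(m-n-1)!/(\tilde\jmath+m-2)!$. The leftover $r_0$-integral carries the factor $r_0^{j_0}F(r_0)^2G(r_0)$ coming from weight times volume element, and by (\ref{ck}) it is exactly $c_{j_0}(F^{\tilde\jmath+m})$, so that
\begin{equation}
\|z^J\|^2=\pi^n\,\frac{j_1!\cdots j_{n-1}!\,(m-n-1)!}{(\tilde\jmath+m-2)!}\,c_{j_0}(F^{\tilde\jmath+m}).\nonumber
\end{equation}
The Dirichlet integral converges iff $m-n-1>-1$, i.e.\ $m>n$; for $m\le n$ every monomial has infinite norm, whence $\hilb_{m\Phi_F}=\{0\}$, which gives the stated equivalence. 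Specializing to Example \ref{uno}, where $F=1-x$, $G=F^{-2}$ and $\gamma=0$, reproduces the hyperbolic norms and is a useful sanity check.

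It then remains to sum $\sum_J|z^J|^2/\|z^J\|^2$. Grouping the monomials by the fibre degree $\tilde\jmath$, the multinomial identity $\sum_{j_1+\dots+j_{n-1}=\tilde\jmath}\prod_{\alpha\ge1}|z_\alpha|^{2j_\alpha}/j_\alpha!=\|z\|^{2\tilde\jmath}/\tilde\jmath!$ collapses the fibre sum, and the residual inner sum $\sum_{j_0}|z_0|^{2j_0}/c_{j_0}(F^{\tilde\jmath+m})$ is evaluated by the hypothesis (\ref{ckcompute}), applied with $m$ replaced by $\tilde\jmath+m$, to $(\tilde\jmath+m-1+\gamma)F(|z_0|^2)^{-(\tilde\jmath+m)}$. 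With $w=\|z\|^2/F(|z_0|^2)$ this leaves
\begin{equation}
K_{m\Phi_F}(z,z)=\frac{F(|z_0|^2)^{-m}}{\pi^n(m-n-1)!}\sum_{\ell=0}^\infty\frac{(\ell+m-2)!}{\ell!}\,(\ell+m-1+\gamma)\,w^\ell.\nonumber
\end{equation}
Splitting $\ell+m-1+\gamma=(\ell+m-1)+\gamma$ and using $\sum_\ell\frac{(\ell+p-1)!}{\ell!}w^\ell=(p-1)!\,(1-w)^{-p}$ for $p=m$ and $p=m-1$, the series sums to $(m-2)!\,(1-w)^{-m}[\,m-1+\gamma(1-w)\,]$. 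Since $F(|z_0|^2)(1-w)=F(|z_0|^2)-\|z\|^2$ and $(m-2)!/(m-n-1)!=(m-2)\cdots(m-n)$, this is exactly the asserted formula.

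The genuinely delicate point is the factorial bookkeeping in the Dirichlet integral: the correct denominator is $(\tilde\jmath+m-2)!$ and not $(\tilde\jmath+m-1)!$, and it is precisely this that yields the prefactor $(m-2)\cdots(m-n)$ (one factor fewer than the naive guess) together with the bracket $m-1+\gamma(1-w)$; an off-by-one here inserts a spurious $(1-w)^{-1}$ and destroys agreement with Example \ref{uno}. The remaining issues, namely absolute convergence of the double series for $\|z\|^2<F(|z_0|^2)$ (which holds throughout $D_F$) and the completeness of the monomial system, are routine for rotation-invariant weighted Bergman spaces and follow the scheme of \cite{englis}.
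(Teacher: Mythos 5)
Your proof is correct and follows essentially the same route as the paper's: orthogonality and completeness of the monomials, evaluation of $\| z_0^{j_0}\cdots z_{n-1}^{j_{n-1}}\|_m^2$ by passing to polar coordinates and a Dirichlet integral over the simplex (yielding exactly the paper's formula (\ref{normahartogs})), the nonexistence statement for $m\le n$ from divergence of that integral, and then summation of the kernel series via hypothesis (\ref{ckcompute}) and the negative binomial series. The only difference is organizational: you collapse the fibre indices with the multinomial theorem into a single series in $w$ before summing, whereas the paper sums over $j_0$ first and keeps the multi-index sum with binomial coefficients---the same computation, and your ordering incidentally sidesteps a harmless factor-$j!$ bookkeeping slip in the paper's intermediate displayed lines, whose final formula is nonetheless the same as yours.
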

\begin{proof}
It is not hard to verify that the monomials $z_0^{j_0}z_1^{j_1}\cdots z_{n-1}^{j_{n-1}}$ are a complete orthogonal system for $\hilb_{m\Phi_F}$, for $m>n$. Hence, the well-known formula for reproducing kernels gives for the Hilbert space $\hilb_{m\Phi_F} $
\begin{equation}\label{eqKm}
K_{ m\Phi_F} (z, z)=\sum_{j_0,\dots,j_{n-1}}\frac{|z_0|^{2j_0}\cdots |z_{n-1}|^{2j_{n-1}}}{\| z_0^{j_0} \cdots z_{n-1}^{j_{n-1}} \|_ m ^2},
\end{equation}
where
$$\| z_0^{j_0} \cdots z_{n-1}^{j_{n-1}} \|_ m ^2=\int_{D_F}\left(F(|z_0|^2) -||z||^2\right)^{ m }\prod_{k=0}^{n-1}|z_k|^{2j_k}\frac{\omega_F^n}{n!}.$$
By formula (\ref{volumelement}) the right hand side is equal to
\begin{equation}
\int_{D_F}\left(F(|z_0|^2) - ||z||^2 \right)^{ m -n-1}\prod_{k=0}^{n-1}|z_k|^{2j_k}F^2(|z_0|^2)G(|z_0|^2)\frac{\omega_0^n}{n!}, \nonumber
\end{equation}
which passing to polar coordinates reads
\begin{equation}
\pi^n\int_0^{{x_0}^{1/2}}\!\!\!\! \int_{0}^{F(r_0^2)^{1/2}}\!\!\!\!\!\!\!\!\cdots\int_0^{(F(r_0^2)-\sum_{i=2}^{n-1}r^2_i)^{1/2}}\!\!\!\!\!\!\!\left(F(r_0^2) - r^2\right)^{ m -n-1}\prod_{k=0}^{n-1}r_k^{2j_k}F^2(r_0^2)G(r_0^2)2^ndrdr_0, \nonumber
\end{equation}
where $r^2=r_1^2+\dots+r_{n-1}^2$, $dr=dr_1\cdots dr_{n-1} $. Making now the substitution $r_i^2=t_i$ and using again the short notation $t=t_1+\dots +t_{n-1}$,  $dt=dt_1\cdots dt_{n-1}$, we get\begin{equation}
\pi^n\!\!\int_0^{{x_0}} \int_{0}^{F(t_0)}\!\!\!\!\cdots\int_0^{F(t_0)-\sum_{i=2}^{n-1}t_i}\!\!\!\left(F(t_0) - t\right)^{ m -n-1}\prod_{k=0}^{n-1}t_k^{j_k}F^2(t_0)G(t_0)dtdt_0, \nonumber
\end{equation}
which substituting $t_k=w_kF(t_0)$ for $k=1,\dots,n-1$, becomes
\begin{equation}
\pi^n\!\!\int_0^{{x_0}} \!\!\!   t_0^{j_0} F(t_0)^{ m +j} G(t_0)  dt_0\! \int_{0}^{1}\!\!\!\cdots\!\!\int_0^{1-\sum_{i=2}^{n-1}w_i}\!\!\!\!\!\left(1 - w\right)^{ m -n-1}\prod_{k=1}^{n-1}w_k^{j_k}dw, \nonumber
\end{equation}
where again $w=w_1+\dots +w_{n-1}$, $dw=dw_1\cdots dw_{n-1}$. If $m\leq n$ the last integral diverges, so we can assume $m>n$. Therefore,
\begin{equation}\label{normahartogs}
\| z_0^{j_0} \cdots z_{n-1}^{j_{n-1}} \|_ m ^2=\pi^n\frac{j_1!\cdots j_{n-1}!( m -n-1)!}{( m +j-2)!}c_{j_0}(F^{ m +j}),
\end{equation}
where  $j=j_1+\dots+j_{n-1}$ and $c_{j_0}(F^{ m +j})$ is defined by (\ref{ck}).
Thus
\begin{equation}
K_{m\Phi_F} (z, z)=\!\!\!\sum_{j_0,\dots,j_{n-1}}\!\!\!|z_0|^{2j_0}\cdots |z_{n-1}|^{2j_{n-1}}\frac{( m +j-2)!}{\pi^n j_1!\cdots j_{n-1}!( m -n-1)!}\left(c_{j_0}(F^{ m +j})\right)^{-1}.\nonumber
\end{equation}
By (\ref{ckcompute}) we can carry out the summation over $j_0$, getting
{\small\begin{equation}
\begin{split}
K_{m\Phi_F}(z, z)=&\sum_{j_1\dots,j_{n-1}}\!\!\!|z_1|^{2j_1}\cdots |z_{n-1}|^{2j_{n-1}}\frac{( m +j-2)!( m +j-1+\gamma)}{\pi^n j_1!\cdots j_{n-1}!( m -n-1)!}F^{- m -j}(|z_0|^2)\\
=&\sum_{j_1\dots,j_{n-1}}\!\!\!\frac{|z_1|^{2j_1}}{F^{j_1}(|z_0|^2)}\cdots \frac{|z_{n-1}|^{2j_{n-1}}}{F^{j_{n-1}}(|z_0|^2)}\frac{( m +j-2)!( m +j-1+\gamma)}{\pi^n j_1!\cdots j_{n-1}!( m -n-1)!}F^{- m }(|z_0|^2)\\
=&\sum_{j_1\dots,j_{n-1}}\!\!\!w_1^{j_1}\cdots w_{n-1}^{j_{n-1}}\frac{( m +j-2)!( m +j-1+\gamma)}{\pi^n j_1!\cdots j_{n-1}!( m -n-1)!}F^{- m }(|z_0|^2)\\
=&\frac{( m -2)\cdots( m -n)}{\pi^n}\!\!\!\sum_{j_1\dots,j_{n-1}}\!\!\!\frac{w_1^{j_1}}{j_1!}\cdots \frac{w_{n-1}^{j_{n-1}}}{j_{n-1}!}\left[{ m +j-1\choose  m -1}( m -1)+\right.\\
&\qquad \qquad \qquad \qquad\qquad \qquad\qquad \qquad \qquad \left.+{ m +j-2\choose  m -2} \gamma\right]F^{- m }(|z_0|^2)\\
=&\frac{( m -2)\cdots( m -n)}{\pi^n}\left[\frac{ m -1}{(1-w)^ m }+\frac{\gamma}{(1-w)^{ m -1}}\right]F^{- m }(|z_0|^2)\\
=&\frac{( m -2)\cdots( m -n)}{\pi^n}\left[\frac{ m -1}{(F(|z_0|^2)-||z||^2)^ m }+\frac{(1-w) \gamma}{(F(|z_0|^2)-||z||^2)^ m }\right]\\
=&\frac{( m -2)\cdots( m -n)}{\pi^n (F(|z_0|^2)-||z||^2)^ m }\left[ m -1+(1-w) \gamma\right].
\end{split}\nonumber
\end{equation}}
\end{proof}

We can now   state and prove  our main result, which characterizes the hyperbolic space among Hartogs domains  in terms of a balanced condition.

\begin{theor}\label{hartogs}
Let  $(D_F, g_F)$ be  a $n$-dimensional   Hartogs domain. Assume that condition (\ref{ckcompute}) is satisfied for all positive integers $m$. 
If $m _0g_F$ is balanced then $m_0>n$ and 
$(D_F, g_F)$ is holomorphically isometric to  an open subset  of  the complex hyperbolic space $(\CH^n, g_{hyp})$.
\end{theor}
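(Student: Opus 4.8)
The plan is to combine the explicit reproducing kernel from Lemma \ref{englislemma} with the rigidity results (Lemmata \ref{constsc} and \ref{scalcurv}) that relate constant scalar curvature to the hyperbolic model. Recall that $m_0 g_F$ is balanced means $\varepsilon_{m_0 g_F}$ is a positive constant, where
\begin{equation}\label{epsform}
\varepsilon_{m_0 g_F}(z)=e^{-m_0\Phi_F(z)}K_{m_0\Phi_F}(z,z)=\bigl(F(|z_0|^2)-\|z\|^2\bigr)^{m_0}K_{m_0\Phi_F}(z,z).
\end{equation}
First I would observe that for $\varepsilon_{m_0 g_F}$ to be even defined and nonzero we need $\hilb_{m_0\Phi_F}\neq\{0\}$, which by Lemma \ref{englislemma} forces $m_0>n$; this immediately yields the first conclusion of the theorem.

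Next I would substitute the closed form of $K_{m_0\Phi_F}$ into \eqref{epsform}. Using the kernel from Lemma \ref{englislemma},
\begin{equation}\label{epscomp}
\varepsilon_{m_0 g_F}(z)=\frac{(m_0-2)\cdots(m_0-n)}{\pi^n}\bigl[m_0-1+(1-w)\gamma\bigr],
\end{equation}
where $w=\|z\|^2/F(|z_0|^2)$. The only way this can be constant in $z$ is for the coefficient of $w$ to vanish, i.e. $\gamma=0$, since $w$ ranges over a nondegenerate interval as $z$ varies in $D_F$. So the balanced hypothesis is equivalent to $\gamma=0$ (together with $m_0>n$).

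The heart of the argument is then to show that $\gamma=0$ forces constant scalar curvature. The natural route is to prove that when $\gamma=0$ the metric $m_0 g_F$ (equivalently $g_F$) admits a \emph{regular} quantization, not merely a balanced one at the single level $m_0$: indeed, if $\gamma=0$ then \eqref{epscomp} shows $\varepsilon_{mg_F}$ is the constant $(m-2)\cdots(m-n)/\pi^n$ for \emph{every} integer $m>n$, since the same kernel formula applies at each level. Once regularity holds, Lemma \ref{constsc} gives that the scalar curvature of $g_F$ is constant, and then Lemma \ref{scalcurv} delivers the holomorphic isometry with an open subset of $(\CH^n, g_{hyp})$, completing the proof. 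The main obstacle is the clean step from a single balanced level $m_0$ to a regular quantization; the calculation makes this transparent because the vanishing of $\gamma$ is level-independent, so no delicate asymptotic analysis is needed—the explicit kernel does all the work. I would close by noting that this also recovers Example \ref{uno} as the special case $F(x)=1-x$, $\gamma=0$.
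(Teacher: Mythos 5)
Your proposal is correct and follows essentially the same route as the paper's own proof: $m_0>n$ from nontriviality of $\hilb_{m_0\Phi_F}$, then $\gamma=0$ forced by constancy of $\varepsilon_{m_0g_F}$ via the explicit kernel, then the level-independence of $\gamma$ upgrading the single balanced level to a regular quantization, and finally Lemmata \ref{constsc} and \ref{scalcurv}. The only blemish is a trivial slip: with $\gamma=0$ the constant value of $\varepsilon_{mg_F}$ is $(m-1)(m-2)\cdots(m-n)/\pi^n$, not $(m-2)\cdots(m-n)/\pi^n$, but this does not affect the argument since only constancy is used.
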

\begin{proof}
Since by Lemma \ref{englislemma} $\hilb_{m\Phi_F}=\{0\}$ for $m_0\leq n$, we can set $m_0>n$. Assume that $m_0g_F$ is balanced, 
namely $e^{m_0\Phi_F}=c_{m_0}K_{m_0\Phi_F}$, 
for some positive constant $c_{m_0}$. Therefore, 
\begin{equation}\label{epsilonF}
\left(F(|z_0|^2) - ||z||^2\right)^{ -m_0 }=c_{m_0}K_{m_0\Phi_F}(z, z).\nonumber
\end{equation}
By Lemma \ref{englislemma} we get
\begin{equation}
\left(F(|z_0|^2) - ||z||^2\right)^{ -m_0 }=c_{m_0}\frac{( m_0 -2)\cdots( m_0 -n)}{\pi^n (F(|z_0|^2)-||z||^2)^ {m_0} }\left[ m_0 -1+(1-w) \gamma\right],\nonumber
\end{equation}
that is
\begin{equation}
\pi^n=c_{m_0}( m_0 -2)\cdots( m_0 -n)\left[ m_0 -1+(1-w) \gamma\right],\nonumber
\end{equation}
which yelds $\gamma=0$, being $(1-w) \gamma$ the only term depending on the variables.
Since $\gamma$ is fixed for all $m$, it follows that the reproducing kernel
of   $\hilb_ {m\Phi_F}$, for $m>n$, is given by
$$K_{{m}\Phi_F}(z, z)=\frac{({m}-1)({m}-2)\cdots({m}-n)}{\pi^n (F(|z_0|^2)-||z||^2)^{{m}}}.$$
By (\ref{epsilon}), we have 
$$\varepsilon_{{m}g_F}(z)=K_{{m}\Phi_F}(z, z)\left(F(|z_0|^2) - ||z||^2 \right)^{{m}}=\frac{({m}-1)({m}-2)\cdots({m}-n)}{\pi^n}.$$ 
Hence, for all $m>n$,  $(D_F, g_F)$ admits a regular quantization. By Lemma \ref{constsc} and Lemma 
\ref{scalcurv} above, $(D_F, g_F)$ is then holomorphically isometric to an open subset of the complex hyperbolic space. 
\end{proof}

Combining Lemma \ref{lemcomplete} with Theorem \ref{hartogs} one gets:

\begin{cor}\label{corolhartogs}
Let  $(D_F,g_F)$ be an $n$-dimensional   Hartogs domain. Assume that conditions   (\ref{complcond}) and   (\ref{ckcompute}) are satisfied (the latter  for all positive integers $m$).
If for some  positive integer $m_0$,  $m _0g_F$ is  a balanced metric then 
$(D_F, g_F)$ is holomorphically isometric to  the complex hyperbolic space $(\CH^n,g_{hyp})$.
\end{cor}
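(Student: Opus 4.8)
The plan is to prove Corollary \ref{corolhartogs} by combining the three main tools already available: Theorem \ref{hartogs}, Lemma \ref{lemcomplete}, and Lemma \ref{scalcurv}. First I would invoke Theorem \ref{hartogs} directly. By hypothesis condition (\ref{ckcompute}) holds for all positive integers $m$, and $m_0 g_F$ is balanced for some positive integer $m_0$; hence Theorem \ref{hartogs} applies verbatim and tells us that $m_0 > n$ and that $(D_F, g_F)$ is holomorphically isometric to an \emph{open subset} of the complex hyperbolic space $(\CH^n, g_{hyp})$. So the entire content of the corollary beyond what the theorem already gives is the upgrade from ``open subset of $\CH^n$'' to ``all of $\CH^n$''.

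The mechanism for that upgrade is the extra hypothesis (\ref{complcond}), which by Lemma \ref{lemcomplete} is exactly the condition that $(D_F, g_F)$ be geodesically complete. So the second step is to observe that $(D_F, g_F)$ is geodesically complete. Now I have a geodesically complete K\"ahler manifold that is holomorphically isometric (via some map $\varphi$) to an open subset $U = \varphi(D_F) \subseteq \CH^n$, where $\CH^n$ is equipped with the complete hyperbolic metric $g_{hyp}$. The claim I want is that $U$ must in fact be all of $\CH^n$.

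The key step, and the one I expect to be the only real point requiring care, is the standard Riemannian argument that a geodesically complete subset which is isometrically embedded as an open subset of a connected manifold is the whole manifold. Concretely: since $\varphi$ is a local isometry and $(D_F, g_F)$ is complete, $\varphi$ is surjective onto the connected target $\CH^n$. One way to see this is to note that completeness of the source implies that geodesics of $g_{hyp}$ emanating from any point of the open set $U$ can be extended indefinitely while staying inside $U$ (their lifts under the isometry exist for all time in $D_F$), so $U$ is geodesically closed in $\CH^n$; a nonempty open and geodesically complete subset of a connected complete manifold coincides with the whole manifold, since $\CH^n$ is connected and every point is reachable from a fixed base point by a minimizing geodesic. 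Hence $U = \CH^n$ and $\varphi: (D_F, g_F) \to (\CH^n, g_{hyp})$ is a holomorphic isometry onto the entire complex hyperbolic space.

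I would then conclude by collecting these observations: (\ref{ckcompute}) together with the balanced hypothesis yields, through Theorem \ref{hartogs}, that $(D_F,g_F)$ is isometric to an open subset of $\CH^n$; the completeness condition (\ref{complcond}), via Lemma \ref{lemcomplete}, forces that open subset to be all of $\CH^n$; therefore $(D_F, g_F)$ is holomorphically isometric to $(\CH^n, g_{hyp})$, as claimed. The argument is short precisely because the heavy analytic work (computing the reproducing kernel, forcing $\gamma = 0$, deducing constancy of the scalar curvature) has already been carried out in establishing Theorem \ref{hartogs}; the corollary's only additional ingredient is the completeness criterion, and the sole subtlety is the Riemannian surjectivity argument sketched above.
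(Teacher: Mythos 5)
Your proposal is correct and follows exactly the paper's route: the paper proves this corollary simply by the phrase ``Combining Lemma \ref{lemcomplete} with Theorem \ref{hartogs} one gets,'' i.e.\ Theorem \ref{hartogs} yields the isometry onto an open subset of $\CH^n$ and condition (\ref{complcond}) (geodesic completeness, via Lemma \ref{lemcomplete}) upgrades it to all of $\CH^n$. Your write-up in fact supplies more detail than the paper does, spelling out the standard Riemannian argument that a geodesically complete open subset of the connected complete manifold $\CH^n$ must be the whole space.
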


\begin{remar}\rm
A balanced metric $g$ on a complex manifold $M$ is projectively induced.  Indeed, there exists a 
holomorphic map $f:M\rightarrow \CP^{\infty}$, called  the {\em coherent states map} in J. Rawnsley terminology \cite{rawnsley},
into the  infinite dimensional complex projective space $\CP^{\infty}$
such that $f^*g_{FS}=g$, where $g_{FS}$ denotes the Fubini--Study metric on $\CP^{\infty}$ 
 (see \cite{arezzoloi} for details).
Not all projectively induced metrics are balanced. Indeed, there exist $n$-dimensional  Hartogs domains $(D_F,g_F)$, $D_F\neq \CH^n$, where $m_0g_F$ is projectively induced for $m_0>n$. An example is given by the so called {\em Springer domain} $(D_F, g_F)$ corresponding to the function  $F(x)=e^{-x}, x\in [0, +\infty)$  
(see \cite{hartogs})). Moreover, it is not hard to verify that  this domain satisfies  condition (\ref{ckcompute}) in Theorem \ref{hartogs} with $\gamma =1$  (see also  \cite{englis}). 
This shows that the condition that $m_0 g_F$ is balanced in Theorem \ref{hartogs} cannot be replaced by the weaker condition that 
$m_0 g_F$ is projectively induced.
\end{remar}

\end{document}